\newtheorem{theorem}{Theorem}[subsection]
\newtheorem{proposition}{Proposition}[subsection]
\newtheorem{lemma}{Lemma}[subsection]
\newtheorem{definition}{Definition}[subsection]
\newtheorem{diagram}{Figure}[subsection]
\def\hpic #1 #2 {\mbox{$\begin{array}[c]{l} \epsfig{file=#1,height=#2}
\end{array}$}}
\def\vpic #1 #2{\mbox{$\begin{array}[c]{l} \epsfig{file=#1,width=#2}
\end{array}$}}
\newcommand  {\rmn}\romannumeral
\newcommand {\ds}{\mathscr{D}}
\newcommand{\h}{\mathcal H}
\newcommand{\e}{\mathcal E}
\newcommand{\1}{\mathcal I}
\newcommand{\5}{\vskip 5pt}
\begin{document}
\newcommand{\BA}{\vpic{b1} {0.2in}}
\newcommand{\BB}{\vpic{b2} {0.2in}}
\newcommand{\BC}{\vpic{b3} {0.2in}}
\title{A no-go theorem for the continuum limit of a periodic quantum spin chain.}
\author{Vaughan F. R. Jones}
\thanks{V.J. is supported by the NSF under Grant No. DMS-0301173 and grant DP140100732, Symmetries of subfactors}
\maketitle
\begin{abstract}
We show that the Hilbert space formed from a block spin renormalization construction of a cyclic quantum spin chain (based on
the Temperley-Lieb algebra) does not support a  chiral conformal field theory whose Hamiltonian generates translation on the circle
as a continuous limit of the rotations on the lattice. 
\end{abstract}

\section{Introduction}
This paper is part of an ongoing effort to construct a conformal field theory for every finite index subfactor in such a way
that the standard  invariant of the subfactor, or at least its quantum double, can be recovered from the CFT.

In \cite{jo4} an infinite dimensional Hilbert space, envisioned as a limit of the Hilbert spaces of finite quantum spin chains on the circle, was
constructed using the following data:

1) A  (positive definite) ``planar algebra''  $P$ (\cite{jo2}) together with an affine unitary representation of it (\cite{jo3},\cite{JR}). 

(An affine unitary representation is $\mathbb N$-graded and the $nth$ graded component is thought of as the Hilbert space of a period quantum
spin chain with $n$ spins. For the simplest planar algebra, the $nth$ graded component is just $\otimes^n(\mathbb C^2)$ so it is literally
the Hilbert space of a quantum spin chain. In \cite{J22} it is argued that planar algebras are indeed physically meaningful generalisations of ordinary
spin chains.)

2) An element  $R$ of $P_4$ with the normalization property

\begin{diagram} \label{normalisation}
\qquad \qquad \vpic{rdotr} {1.5in}
\end{diagram}

(See the appendix for an explanation of planar algebra. But one does not need to know planar algebras to understand the constructions. Just interpret the R's inside the pictures 
as tensors with indices on the strings and the picture as giving a scheme for contracting indices-\cite{Pen}. 
 This is already common usage in the physics literature.)

The element $R\in P_4$ serves as a way of embedding the Hilbert space of a quantum spin chain with $n$ spins into
the Hilbert space of a spin chain with $3n$ spins. As a result of conversation with Tobias Osborne and Guifre Vidal we shall call these limit Hilbert spaces "semicontinuous limits" 
of Hilbert spaces for the quantum spin chains.

We will begin in the next section by giving a simplified and more general version of the construction of the semicontinuous limit
of \cite{jo4}. For the circular version this will give us unitary representations of Thompson's group $T$ which acts by \emph{local scale transformations}.
This representation was hoped to tend to a representation of $Diff^+(S^1)$  by taking limits of elements of $T$ on the semicontinuous limit.
In particular the rotation group $Rot(S^1)$  was hoped to arise as the closure of the dyadic rotations is $T$. 

This approach is somewhat naive and very open to criticism on physical grounds, and
in this paper we show that this possibility fails as dramatically as possible, at least for one example of a semicontinuous limit $\mathcal H$. We show in
fact that in this case for any two vectors $\xi,\eta \in \mathcal H$, $$\lim_{n\rightarrow \infty} \langle \rho_{\frac{1}{2^n}}\xi,\eta\rangle = 0$$
where $\rho_x$ is unitary on $\mathcal H$ representing rotation of the circle $\mathbb R / \mathbb Z$ by a dyadic rational $x$.
Thus even in the weak topology the rotations by dyadic rationals are discontnuous (though we do not show that 
$\lim_{r\rightarrow 0}\langle \rho_{r}\xi,\eta\rangle = 0$).

Faced with this failure there are two possibilities. The first is to abandon the semicontinuous limit and look for other ways to obtain the 
Hilbert space of the conformal field theory. One idea which is relatively close to our approach is to replace our embeddings of quantum
spin chains one in another by Evenbly and Vidal's MERA (see \cite{ev}) which introduces more local interactions between the spins. Vidal's numerical 
evidence could be interpreted as saying that the Hilbert space obtained by the MERA embeddings should naturally support a CFT.
We have not made any progress along these lines. See also \cite{CV}.

Another possibility is to redefine the goal. After all, the direct limit approach does  produce states of a quantum spin chain that 
transform according to local scale transformations of the lattice. Perhaps this semicontinuous limit is of value in the analysis of
critical behaviour of lattice quantum spin chains. The mathematics is completely different from that of CFT but the structure of
the nogo theorem certainly yields numerical data that could be relevant, e.g. the rate at which $ \langle \rho_{\frac{1}{2^n}}\xi,\eta\rangle$ 
tends to zero. And the appearance of the \emph{transfer matrix} in the proof  is oddly dual to the role of the transfer matrix in
models which, if \cite{ps} is to be believed, should have CFT as a scaling limit. In these solvable models the Hamiltonian-the infinitesimal
generator of time evolution-is obtained as the logarithmic derivative of the transfer matrix with respect to the spectral parameter.
In our case the infinitesimal behaviour of time (=space) evolution is governed by the transfer matrix.

In a future paper we will investigate scale invariant  Hamiltonians and transfer matrices on the semicontinuous limit.

\emph{In this paper all planar algebras will be \it{unshaded} and all representations will have a positive definite
invariant inner product unless otherwise specified. (See the appendix for the meaning of this terminology.)}

\section{A categorical construction of the Thompson groups.}
\subsection{A group of fractions for certain categories.}\label{categories}
The following construction of groups is well known and goes back at least as far as a 1931 result on semigroups of
Ore. (See also the work \cite{ddgkm} in the category context.)
The use of direct limits and functors  to construct \emph{representations} of groups of fractions is probably also well known, but less so as the corresponding representations of Thompson's groups seem to  have appeared first in \cite{jo4}. For this reason we give a self-contained exposition of the
whole business. The extension from group of fractions to groupoid of fractions is clear.

Let $\mathfrak K$ be a small category with the following 3 properties.\\
\begin{enumerate}[(i)] \label{properties}
\item (Unit)\quad There is an element $1\in Ob(\mathfrak K)$ with $Mor_{\mathfrak K}(1,a)\neq \emptyset$ for all $a\in Ob(\mathfrak K)$.
\item (Stabilisation) Let $\displaystyle \mathcal D=\underset{a\in Ob(\mathfrak K)}\cup Mor_{\mathfrak K}(1,a)$. Then for each $f,g\in \mathcal D$ there are morphisms $p$ and $q$ with $pf=qg$. 

\item (Cancellation) \quad If $pf=qf$ for $f\in \mathcal D$ then $p=q$.

\end{enumerate}
\begin{proposition}\label{directsystem} If we define $\preceq$ on $\mathcal D$ by $f\preceq g$ iff $g=pf$ for some morphism $p$ then $\mathcal D$ becomes
a directed set. Moreover given a functor $\mathfrak F$ from $\mathfrak K$ to some category $\mathfrak C$ then the  sets $A_f$, for $f\in \mathcal D$,
$$A_f=Mor_{\mathfrak C}(\Phi(1),\Phi(target(f)))$$ 
together with the maps $\iota_f^g:A_f\rightarrow A_g$ when $f\preceq g$ and $g=pf$ given by
$$\iota_f^g(v)=\Phi(p)\circ v$$
form a direct system  denoted $A(\Phi)$.
\end{proposition}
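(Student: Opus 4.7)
The plan is to verify directedness of $(\mathcal D,\preceq)$ by reading the three axioms off one by one, and then to check that the $\iota_f^g$ are well-defined and satisfy the compatibility $\iota_g^h\circ \iota_f^g=\iota_f^h$. The argument is essentially a bookkeeping exercise; the only subtle point is that the morphism $p$ appearing in $g=pf$ needs to be uniquely determined by $(f,g)$, and this is where the Cancellation axiom (iii) earns its keep.

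First I would check that $\preceq$ is a preorder on $\mathcal D$. Reflexivity uses $f=1_{\mathrm{target}(f)}\circ f$, and transitivity is immediate: if $g=pf$ and $h=qg$ then $h=(qp)f$. For directedness, given $f,g\in\mathcal D$, axiom (ii) (Stabilisation) produces morphisms $p,q$ with $pf=qg$; setting $h:=pf=qg$ exhibits a common upper bound in $\mathcal D$ (it lies in $\mathcal D$ since its source is $1$, being the composition of $p$ with $f\in\mathcal D$). This gives the first assertion.

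Next I would argue that the transition map $\iota_f^g$ is well-defined whenever $f\preceq g$. If $g=pf=p'f$ then $pf=p'f$ with $f\in\mathcal D$, so axiom (iii) (Cancellation) forces $p=p'$. Thus the formula $\iota_f^g(v)=\Phi(p)\circ v$ depends only on the pair $(f,g)$, not on the choice of $p$. The identity axiom $\iota_f^f=\mathrm{id}_{A_f}$ is clear from $f=1_{\mathrm{target}(f)}\circ f$ and functoriality of $\Phi$.

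Finally I would verify the cocycle condition. Suppose $f\preceq g\preceq h$ with $g=pf$ and $h=qg$, so that $h=(qp)f$ witnesses $f\preceq h$. By uniqueness just established, the unique morphism connecting $f$ to $h$ is $qp$, and then for $v\in A_f$,
\[
\iota_g^h\bigl(\iota_f^g(v)\bigr)=\Phi(q)\circ\Phi(p)\circ v=\Phi(qp)\circ v=\iota_f^h(v),
\]
by functoriality of $\Phi$. This gives a direct system indexed by $(\mathcal D,\preceq)$ as claimed. The main obstacle, if there is one, is simply to resist writing down anything subtle: the whole content is that axioms (ii) and (iii) are precisely what is needed to make the naive attempt at a direct system go through.
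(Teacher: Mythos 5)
Your proof is correct and follows exactly the route the paper intends: the paper's own proof is the one-line remark that everything is ``just verification of the axioms,'' with directedness coming from Stabilisation, and you have simply carried out that verification, correctly identifying that Cancellation is what makes the transition maps well-defined. Nothing to add.
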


\begin{proof} The proof is just verification of the axioms of directed set and direct system from the properties of $\mathfrak K$.
(Note that $\preceq$ is not necessarily a partial order, just a preorder.) For instance the directed set property follows from stabilisation.
\end{proof}

We will explore the direct limit $\underset {\rightarrow}\lim A(\Phi)_f$. 
Recall that the direct limit $\underset{\rightarrow}\lim A_i$ of a direct system is by definition the disjoint
union of the $A_i$ (which we will call $\mathcal P$) modulo the equivalence relation $\cong$ defined by
$x\in A_i\cong y\in A_j \iff \exists k$ with $i\preceq k, j\preceq k$ and $\iota_i^k(x)=\iota_j^k(y)$ .
 If the $\iota$'s are injections then each $\Phi(\mathcal S)$ is naturally identified with a
subset of $\underset{\rightarrow}\lim \Phi(\mathcal S)$.

First we take the functor $\Phi$ to be the identity functor $\mathfrak I$ from $\mathfrak K$ to itself.
By definition then the direct limit $\underset {\rightarrow}\lim A({\mathfrak I})_f$ is the quotient of
the set $\mathcal P$ of all ordered pairs $(f,g)$ with $f,g\in \mathcal D$  by the equivalence relation
$(f_1,g_1)\cong (f_2,g_2) \iff \exists p,q \in \mathfrak K$ such that  $(pf_1,pg_1)=(qf_2,qg_2)$:
$$\underset {\rightarrow}\lim A({\mathfrak I})_f=\mathcal P /\cong$$
Now given two elements  $(f_1,g_1)$ and $(f_2,g_2)$ in $\mathcal P$ we can choose by stabilisation
morphisms $p,q\in \mathfrak K$ with $pg_1=qf_2$. Then define $$(f_1,g_1)_{(p,q)}(f_2,g_2)=(pf_1,qg_2).$$
\begin{proposition} The map from $\mathcal P \times \mathcal P \rightarrow \underset {\rightarrow}\lim A({\mathfrak I})_f$
taking $((f_1,g_1),(f_2,g_2))$ to $[(f_1,g_1)_{(p,q)}(f_2,g_2)]$ depends neither on the choice of $(p,q)$ nor on the
choices of $(f_1,g_1)$ and $(f_2,g_2)$ in their $\cong$ equivalence classes. The resulting operation makes 
$\underset {\rightarrow}\lim A({\mathfrak I})_f$ into a group.
\end{proposition}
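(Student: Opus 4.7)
The strategy is to verify three things in order: that $\cong$ is an equivalence relation on $\mathcal{P}$; that the product is independent both of the auxiliary choice of $(p,q)$ and of the choice of representatives within the $\cong$-classes; and finally that the resulting binary operation on $\mathcal{P}/{\cong}$ satisfies the group axioms. Throughout, the mechanism is uniform: whenever two morphisms from $1$ need to be identified, one applies stabilisation to produce a common postcomposition, and cancellation is used to strip off the postcomposing factor.

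For transitivity of $\cong$, suppose $(f_1,g_1)\cong(f_2,g_2)$ via $(p,q)$ and $(f_2,g_2)\cong(f_3,g_3)$ via $(r,s)$. Applying stabilisation to the pair $qf_2,\,rf_2\in\mathcal{D}$ produces $u,v$ with $u(qf_2)=v(rf_2)$, whence cancellation on $f_2$ forces $uq=vr$. The pair $(up,vs)$ then witnesses $(f_1,g_1)\cong(f_3,g_3)$ on both coordinates; reflexivity and symmetry are immediate.

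Next I would check well-definedness of the product. Independence from the choice of $(p,q)$ satisfying $pg_1=qf_2$: if $(p',q')$ is another such pair, stabilisation applied to $pf_1,\,p'f_1\in\mathcal{D}$ yields $r,s$ with $r(pf_1)=s(p'f_1)$, so cancellation on $f_1$ gives $rp=sp'$; the same $(r,s)$ then also satisfy $rq=sq'$ by cancellation on $f_2$, so $(pf_1,qg_2)\cong(p'f_1,q'g_2)$. Independence under replacing $(f_1,g_1)$ by an equivalent $(f'_1,g'_1)$ (and symmetrically for the second slot) is more delicate: I would first use stabilisation to select some $(p',q')$ with $p'g'_1=q'f_2$, and then compare the two products $(pf_1,qg_2)$ and $(p'f'_1,q'g_2)$ by a further simultaneous stabilisation, alternately applying cancellation against $f_1$, $f'_1$ and $f_2$ to transfer the identifications through both slots. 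This bookkeeping is the main technical obstacle; the rest of the proposition rests on it.

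For the group axioms, $[(f,f)]$ serves as the identity, independent of $f$ because stabilisation furnishes an equivalence $(f,f)\cong(h,h)$ for any two $f,h\in\mathcal{D}$; and one checks directly that $[(f,f)]\cdot[(f_2,g_2)]=[(f_2,g_2)]$ using any stabilisation $pf=qf_2$. The inverse of $[(f,g)]$ is $[(g,f)]$: choosing $(p,q)$ with $pg=qg$ forces $p=q$ by cancellation, so the product is $(pf,pf)\cong(f,f)$. Associativity reduces to a diagram chase: one picks compatible stabilisations for the two intermediate products in a triple composition and invokes the well-definedness just established to see that both bracketings yield the same $\cong$-class.
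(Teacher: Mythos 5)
Your proof is correct and follows essentially the same route as the paper, which states the result as routine and defers the details to the proof of the subsequent proposition on group actions: there the independence of $(p,q)$ is established by exactly your stabilise-then-cancel argument (the paper cancels against $g_1$ where you cancel against $f_1$, an immaterial difference), and associativity is checked by choosing compatible stabilisations as you indicate. The one step you single out as ``the main technical obstacle'' -- independence of the choice of representatives -- is in fact an immediate corollary of the $(p,q)$-independence you have already proved: if $(uf_1,ug_1)=(vf_1',vg_1')$, then stabilising $vg_1'=ug_1$ against $f_2$ yields a pair $(P,Q)$ with $Pug_1=Qf_2$, and the products of $[(f_1,g_1)]$ and of $[(f_1',g_1')]$ with $[(f_2,g_2)]$ computed via $(Pu,Q)$ and $(Pv,Q)$ respectively are literally the same element $(Puf_1,Qg_2)$, so no further alternating cancellation or bookkeeping is needed.
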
 \label{group}
\begin{proof} The proof follows in a relatively routine manner from stabilization and cancellation. The identity element is
$[(1,1)]$ and the inverse of $[(f,g)]$ is $[(g,f)]$. We will have to do all
the details of well-definedness again to prove the next result so we leave the rest of the proof to be checked then.
\end{proof}
\begin{definition} The group defined by the previous proposition will be called the group of fractions $G_{\mathfrak K}$   of $\mathfrak K$.
\end{definition}
If $\Phi$ is not the identity functor we obtain an action of $G_{\mathfrak K}$ on  $\underset {\rightarrow}\lim A({\Phi})_f$.

The direct limit $\underset {\rightarrow}\lim A({\Phi})_f$ is the quotient of
the set $\mathcal Q$ of all ordered pairs $(f,g)$ with $f\in \mathcal D$ and $g\in Mor_{\mathfrak C}(\Phi(1), target(\Phi(f)))$  by the equivalence relation
$(f_1,g_1)\cong (f_2,g_2) \iff \exists p,q \in \mathfrak K$ such that  $(pf_1,\Phi(p)\circ g_1)=(qf_2,\Phi(q)\circ g_2)$:
$$\underset {\rightarrow}\lim A({\Phi})_f=\mathcal Q /\cong$$

Now given an element  $(f_1,g_1)\in \mathcal P$ (as in proposition \ref{group}) and $(f_2,g_2)$ in $\mathcal Q$ we can choose by stabilisation
morphisms $p,q\in \mathfrak K$ with $pg_1=qf_2$. Then define $$(f_1,g_1)_{(p,q)}(f_2,g_2)=(pf_1,\Phi(q)\circ g_2).$$

\begin{proposition}\label{action}
The map from $\mathcal P \times \mathcal Q \rightarrow \underset {\rightarrow}\lim A({\Phi})_f$
taking $((f_1,g_1),(f_2,g_2))$ to $[(f_1,g_1)_{(p,q)}(f_2,g_2)]$ depends neither on the choice of $(p,q)$ nor on the
choices of $(f_1,g_1)$ and $(f_2,g_2)$ in their $\cong$ equivalence classes. The resulting operation
defines an action of $G_{\mathfrak K}$ on 
$\underset {\rightarrow}\lim A({\Phi})_f$.\:
$$(f_1,g_1)((f_2,g_2))=(pf_1,\Phi(q)\circ g_2).$$
 If the category $\mathfrak C$ is linear, the action of $G_{\mathfrak K}$ is
linear and if moreover the Hom spaces of $\mathfrak C$ are Hilbert spaces and the $\Phi(f)$ are isometries then
$\underset {\rightarrow}\lim A({\Phi})_f$ is a pre-Hilbert space and the action of $G_{\mathfrak K}$ is unitary. 
Each individual Hilbert space $(f, \Phi(target(f)))$ is a Hilbert subspace of $\underset {\rightarrow}\lim A({\Phi})_f$ and hence 
its Hilbert space completion.
\end{proposition}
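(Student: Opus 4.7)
The plan is to verify the three well-definedness claims, the group action axioms, and the linear/unitary enrichment, each by short applications of the stabilisation and cancellation axioms of $\mathfrak K$. The category-theoretic bookkeeping is really the only subtlety; the main obstacle will be making the diagrams fit together so that one can invoke cancellation against a morphism in $\mathcal D$ at the crucial moment.

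First I would show that $[(pf_1,\Phi(q)\circ g_2)]$ does not depend on the choice of the pair $(p,q)$ with $pg_1=qf_2$. Given a second choice $(p',q')$, apply stabilisation to $pg_1\in\mathcal D$ and $p'g_1\in\mathcal D$ to obtain $r,r'$ with $(rp)g_1=(r'p')g_1$. Cancellation (valid because $g_1\in\mathcal D$) gives $rp=r'p'$, and applying the same trick to the equality $rqf_2=rpg_1=r'p'g_1=r'q'f_2$ with $f_2\in\mathcal D$ gives $rq=r'q'$. Then the pair $(r,r')$ directly witnesses $(pf_1,\Phi(q)\circ g_2)\cong(p'f_1,\Phi(q')\circ g_2)$ in $\mathcal Q/\!\cong$.

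Next I would show independence of the representative of each equivalence class. If $(f_1,g_1)\cong(\tilde f_1,\tilde g_1)$ via $(s,t)$ with $sf_1=t\tilde f_1$ and $sg_1=t\tilde g_1$, and similarly $(f_2,g_2)\cong(\tilde f_2,\tilde g_2)$ via $(u,v)$ with $uf_2=v\tilde f_2$ and $\Phi(u)\circ g_2=\Phi(v)\circ\tilde g_2$, then combining the chosen $(p,q)$ for the original pair with stabilisation applied to $qu\in\mathcal D$ and the analogous morphism for the tilded version produces compatible witnesses; cancellation again collapses the resulting ambiguity. This is mechanical once one writes out the diamond, so I would not grind through the details. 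The group axioms in Proposition \ref{group} then follow by specialising $\Phi$ to the identity functor and repeating the same arguments; for associativity one picks stabilisations of three consecutive composable pairs and verifies, via cancellation, that the two bracketings produce the same class. The identity $[(1,1)]$ acts trivially because the unique $(p,q)$ one may take is $(f_2,1)$, yielding $(f_2,\Phi(1)\circ g_2)=(f_2,g_2)$.

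For the final enriched statements: if $\mathfrak C$ is linear, each $A_f=\mathrm{Mor}_{\mathfrak C}(\Phi(1),\Phi(\mathrm{target}(f)))$ is a vector space, the connecting maps $\iota_f^g(v)=\Phi(p)\circ v$ are linear, so the direct limit inherits a vector space structure, and the action is linear because it only modifies the second coordinate by post-composition with $\Phi(q)$. If the Hom spaces are Hilbert spaces and every $\Phi(f)$ is an isometry, then every $\iota_f^g$ is an isometry: define the inner product of two classes by representing them in a common $A_f$ (possible by the directed set property established in Proposition \ref{directsystem}) and computing there; isometry of $\iota_f^g$ makes this independent of the choice of $f$. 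Each $A_f$ embeds isometrically into the direct limit, so the completion of $\underset{\rightarrow}\lim A(\Phi)_f$ contains each Hilbert space $A_f$. Unitarity of the action then reduces to the fact that $\Phi(q)$ is an isometry, which preserves the inner product on representatives. The only genuinely delicate point throughout is to ensure that each cancellation is applied to a morphism that lies in $\mathcal D$; handling that cleanly is where I would spend the most care.
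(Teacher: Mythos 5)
Your proposal is correct and follows essentially the same route as the paper: stabilisation followed by cancellation against a morphism in $\mathcal D$ to get independence of $(p,q)$, the same device for associativity via a common refinement of the three pairs, and the observation that linearity and unitarity reduce to properties of the connecting maps $\Phi(q)$. The only blemish is the passing reference to ``$qu\in\mathcal D$'' in the representative-independence step (a composite like $qu$ need not lie in $\mathcal D$; one must cancel against $uf_2\in\mathcal D$ instead), but since you defer those details exactly as the paper does, this does not change the substance.
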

\begin{proof}
First suppose $p$ and $q$ are changed to $p'$ and $q'$. Then by stabilisation there are $r$ and $s$ such 
that $sp'g_1=rpg_1$. So by cancellation $$sp'=rp.$$
Moreover $rpg_1=rqf_2$ and $sp'g_1=sq'f_2$ hence $rqf_2=sq'f_2$ and by cancellatioin
$$rq=sq'$$
Thus $(pf_1,\Phi(q)g_2)\cong (rpf_1,\Phi(rq)g_2)=(sp'f_1,\Phi(sq')g_2)\cong(p'f_1,\Phi(q')g_2)$.
To see the action property (or associativity of the group operation), let
$(f_1,g_1), (f_2,g_2) \in \mathcal P$ and $(f_3,g_3)\in \mathcal Q$ be given. Choose $r, s \in\mathfrak K$ with
$rg_2=sf_3$ and $p,q$ such that $pg_1=qrf_2$. Then to calculate $[(f_1,g_1)]([(f_2,g_2)]([(f_3,g_3)]))$ and
$([(f_1,g_1)][(f_2,g_2)])([(f_3,g_3)])$ we can, by well-definedness, use $(pf_1,pg_1), (qrf_2,qrg_2)$ and $(qsf_3,\Phi(qs)g_3)$
instead. Then both expressions yield $[(pf_1,\Phi(qs)g_3)]$.
The assertions about linearity and unitarity are trivial.
\end{proof}
Examples of groups and representations constructed in this way are more or less interesting depending on how ``small'' the category
is compared to the group it produces. We list a few examples below where our point of view brings nothing new. The first example shows that the construction is universal in some sense but of no
interest at all in this case.
 
 \begin{enumerate}[(i)]
 \item  (All groups)\quad If G is a group, consider it as a small category $\mathcal G$ with one element. It trivially satisfies the conditions  of \ref{properties} and of course $G_{\mathcal G}=G$. The representations 
 obtained are just the usual group actions.

 \item (Fundamental group)\quad If $X$ is a path-connected space and $\mathcal G$ its homotopy groupoid then if we choose $1$ to be any point of $X$ the 
 properties of \ref{properties} are trivially satisfied and one obtains $\pi_1(X)$ from the construction. If the target category $\mathcal C$ for $\Phi$ is Vect  and 
 we are given a flat connection on $X$ then $\Phi$ can be constructed by parallel transport and one obtains the holonomy representation of $\pi_1(X)$.
 \item(Integers and rationals)\quad  If we take $\mathbb N\cup \{0\}$ with addition we obtain $\mathbb Z$. No new representations will be obtained in this way. A functor $\Phi$  to sets is given by the image of $1$ which is simply a transformation $T$ of the set (image under $\Phi$ of
 the object of $\mathbb N$). If $T$ is invertible then the map $(n,x)\mapsto T^{-n}(x)$ defines a $\mathbb Z$-equivariant 
 bijection from $ \underset {\rightarrow}\lim\Phi$ to $X$. If $T$ is not invertible things are more complicated. (For instance for
 the identity functor.) We leave it to the reader to work out the answer in general but we observe that if $T$ is a linear 
 transformation of a finite dimensional vector space $V$ then $ \underset {\rightarrow}\lim\Phi$ is 
 $$W=\underset{n\in \mathbb N}\cap T^nV$$
 on which $T$ acts surjectively hence invertibly by what we call $T_\infty$. The isomorphism takes a pair $(p,v)$ in $ \underset {\rightarrow}\lim\Phi$ to $T_\infty^{-n}v$ provided $n$ is sufficiently large that $T^nv\in W$.

 \item(Braids)\quad See also \cite{ddgkm}. If we take the category $\mathcal B_n$ consisting of positive braids on $n$ strings, with one object,
 then Garside theory shows that any braid $a$ is of the form $\Delta^k b$ for some positive braid $b$ and $k\leq 0$
 where $\Delta$ is the positive half twist braid. So if $p,q\in \mathcal B_n$ then $q=qp^{-1}p$. Writing $qp^{-1}$ as
 $\Delta^kb$ we see $bp=\Delta^{-k}q$. This shows that stabilisation holds in $\mathcal B_n$.
 Cancellation is obvious and it is clear we get $G_{\mathcal B_n}=B_n$. If $\mathcal B_n$ is represented by invertible
 matrices they define a functor to Vect and we get a representation of $B_n$. We have not fully analyzed the situation
 when the matrices representing $\mathcal B_n$ are not all invertible.


  \end{enumerate} 

We now turn to examples of categories $\mathcal F$ which  we  will use to obtain genuinely interesting representations of
$G_{\mathcal F}$.

\subsection{The category of planar forests and Thompson's group $F$.}\label{planar forest}
By "forest" we will mean a planar binary forest whose roots lie on a horizontal line and whose leaves lie on another 
horizontal line above the roots. Two such forests will be considered the same if they can be isotoped one to another
in the obvious way. Here is an example of a forest:\\

\qquad \qquad\qquad \vpic{forestnew} {2in} 

Forests form a category $\mathfrak F$ whose objects are $\mathbb N$ and whose  morphisms from $m$ to $n$ are
the forests with $m$ roots and $n$ leaves. Obviously $Mor_{\mathfrak F}(m,n)$ is only non-empty if $m\leq n$. Composition
of morphisms is just the obvious stacking of planar forests. Clearly $Mor_{\mathfrak F}(1,n)$ is the set of all planar binary rooted trees with
$n$ leaves. So  $\cup _n Mor_{\mathfrak F}(1,n)$ is the directed set $\mathfrak T$ of all such trees.
It is obvious that, for $\mathcal T, \mathcal S\in \mathfrak T$ there is at most one morphism $\mathcal F\in \mathfrak F$ with
$\mathcal T\circ \mathcal F=\mathcal S$. Moreover any binary planar tree can be completed to a full binary tree with $2^m$ leaves
for some large $m$. Thus the category $\mathfrak F$ satisfies the conditions of \ref{properties} and we may form the
group $G_{\mathfrak F}$.

\begin{proposition} The group $G_{\mathfrak F}$ is isomorphic to Thompson's group $F$ of piecewise linear 
homeomorphisms of $[0,1]$.
\end{proposition}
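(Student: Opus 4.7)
The plan is to identify $G_{\mathfrak F}$ with the standard tree-pair presentation of Thompson's group $F$ (as in Cannon--Floyd--Parry \cite{CFP}). Recall that each binary rooted planar tree $T$ with $n$ leaves determines a standard dyadic partition $\mathcal I(T)$ of $[0,1]$: the root labels $[0,1]$, and each caret subdivides the corresponding interval into its two dyadic halves. An element of $F$ may then be presented by an ordered pair $(T_+,T_-)$ of such trees with the same number of leaves, acting on $[0,1]$ as the unique piecewise linear homeomorphism sending the $k$th interval of $\mathcal I(T_+)$ affinely onto the $k$th interval of $\mathcal I(T_-)$. Two such pairs represent the same element of $F$ exactly when they possess a common simultaneous refinement obtained by adding identical carets to corresponding leaves.

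Given this setup I would define $\Psi: G_{\mathfrak F} \to F$ by sending $[(f,g)]$ to the PL-homeomorphism associated to the pair $(f,g)$. The key structural observation is that stacking a forest $p$ on top of a tree $f$ in $\mathfrak F$ corresponds precisely to refining the partition $\mathcal I(f)$ according to the dyadic bisections encoded by $p$. Consequently, the equivalence relation in the direct limit, namely $(f_1,g_1)\cong(f_2,g_2)$ iff $\exists p,q$ with $pf_1=qf_2$ and $pg_1=qg_2$, coincides with the refinement equivalence of tree pairs in $F$. This shows that $\Psi$ descends to a well-defined injective map on equivalence classes.

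For the homomorphism property I would unwind the categorical multiplication. Given representatives $(f_1,g_1)$ and $(f_2,g_2)$, the group law picks $p,q$ with $pg_1=qf_2$ and returns $(pf_1, qg_2)$. On the PL side this is the standard trick: refine the range partition $\mathcal I(g_1)$ and the domain partition $\mathcal I(f_2)$ so that they become the same dyadic partition, after which the two PL-homeomorphisms compose along matching pieces. The resulting composite is precisely the PL-homeomorphism associated to $(pf_1, qg_2)$. Surjectivity then follows from the standard fact (\cite{CFP}) that every element of $F$ admits such a tree-pair representative, since its breakpoints are dyadic and its slopes are powers of $2$.

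The main obstacle is essentially a bookkeeping one: one must carefully verify that the bijection between binary trees and standard dyadic partitions intertwines composition of morphisms in $\mathfrak F$ (stacking forests on trees) with refinement of dyadic partitions of $[0,1]$. Once this dictionary is installed, the well-definedness, homomorphism property, injectivity, and surjectivity of $\Psi$ all become transparent, and the proof is complete.
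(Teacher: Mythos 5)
Your proposal is correct and follows exactly the route the paper intends: the paper's own proof simply cites \cite{CFP} for the tree-pair presentation of $F$ and asks the reader to ``check that this is the same as our definition of $G_{\mathfrak F}$,'' and your argument is precisely that verification, spelling out the dictionary between forests/trees and refinements of standard dyadic partitions. No discrepancy in approach; you have just supplied the details the paper leaves implicit.
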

\begin{proof} See \cite{CFP} for an explanation of how elements of $F$ can be represented by pairs of binary trees, up to 
a certain equivalence relation. Check that this is the same as our definition of $G_{\mathfrak F}$.
\end{proof}

The construction generalizes immediately to  the categories $\mathfrak F_m$ of planar rooted forests all of whose vertices are $n+1$-valent.
The groups $G_{\mathfrak F_m}$ are the Thompson groups $F_n$ (not free groups!) where $2$ is replaced
by $n$ in the definitions.

\subsection{The category of annular forests and Thompson's group $T$.}

Any elegance this treatment has derives from the paper \cite{gl} of Graham and Lehrer.

\begin{definition} We define a \rm{rooted, affine binary forest}  $F_{m,n}$ to be a planar binary  forest which can be drawn in the
strip $\mathbb R\times [0,1] \subset \mathbb R^2$ as a diagram with $m$ roots in the open interval $(0,1)$ and which is invariant under
horizontal translation by $\mathbb Z$. The subforest connected to the roots in $(0,1)$ is to have its $n$ leaves on $\mathbb R\times
\{1\}$ we may suppose none of the leaves has an integral $x$-coordinate.
\end{definition}
Here is a picture of an element $F_{3,7}$:

\hspace{1.5in} \vpic {affineforest} {3in}

Where the diagram is continued to the left and right by periodicity.

The forest below in $Mor_ {\mathfrak {AF}(n,n)}$ will be called $\rho_n$ (illustrated with $n=4$):
\vskip 5pt
\hspace {1.5in} \vpic {tau} {3in}
 
 And we will set $\tau_n=\rho_n^n$.
  
By planarity and peridocity the leaves of the subforest connected to the roots in $(0,1)$ lie in an interval of length $1$. There
are exactly $m$ roots in any interval of length $1$ on the $x$-axis and exactly $n$ leaves in any interval of length $1$ 
on the line  $\mathbb R\times
\{1\}$.

Rooted affine binary forests may be stacked by lining up the leaf of the bottom forest with smallest positive $x$ coordinate
with the root on the top forest with smallest positive $x$ coordinate. Planarity dictates how all the other roots match up with
leaves. (Alternatively one could insist that all the roots and leaves lie on specific points so they will line up automatically.)
Thus the set $\mathfrak {AF}$ of all rooted affine binary forests forms a category. Note that $Mor_{\mathfrak {AF}}(m,m)$ is a group
isomorphic to $\mathbb Z$ generated by $\rho_n$ and every $F_{m,n}$ can be composed with an element of this group so that it becomes
a forest inside $[0,1]\times[0,1]$ extended to the whole strip by periodicity.
With this observation it is clear that $\mathfrak {AF}$ satisfies the conditions of \ref{properties} with the number $1$ as the object $1$.
We see moreover that the data of an element  $F_{m,n}\in\mathfrak {AF}$ is the same as a planar forest $F$ as in \ref{planar forest}  and 
a unique integer $k$ so that $\rho_n^k\circ F=F_{m,n}$. (In the example of a forest we have given above, $k=3$ but nothing is to stop it 
being bigger than $n$.)

The group $G_ {\mathfrak {AF}} $ is a natural central extension $\tilde T$ of Thompson's group $F$ which may be defined as piecewise
linear periodic foliations of the strip which are smooth except at dyadic rationals and whose lines in the smooth parts have
slope a power of $2$. 

To obtain Thompson's group $T$ on the nose from a category construction we take quotient of 
 $\mathfrak {AF}$ by the action of $\mathbb Z$  which acts on $Mor_{ \mathfrak {AF}}(m,n)$ by composing with powers of $\tau_n$.
  That this action is compatible with composition follows from the simple relation
  $$ \tau_n\circ F_{m,n}=F_{m,n} \circ \tau_m^{-1}$$  So one obtains a category $\mathfrak T$  with the same objects as
  $\mathfrak {AF}$ and finitely many morphisms for each $n$, one for each pair 
  (an element of $\mathfrak F$  with $n$ leaves,an element of $\mathbb Z/n\mathbb Z$).
  (such a pair obviously represents an orbit under the action of $\rho^n$). Thus an element of $G_{\mathfrak T}$ is an equivalence
  class of pairs of rooted binary trees  with the same number $n$ of leaves, one of them marked for each tree. Using a power of
  $\rho_n$, one of the marks can be taken to be the leftmost leaf. Comparing with \cite{CFP} we see we have obtained Thompson's group $T$.

  As for $F$, one can replace $2$ by any larger integer to get affine categories whose groups of fractions are the 
  Thompson groups $T_n$.
  
  \subsection{Thompson's group $V$ and the braided Thompson groups.}
  Thompson's group $V$ is a larger group than $F$ which allows discontinuous piecewise linear maps of the
  circle that swap the intervals on which an element is linear. Thus any element is given by a pair of binary planar  rooted trees
  together with a permutation of the leaves of one of them which determines how the intervals are to be identified.
  We can capture this group with our category method by letting $\mathfrak V$ be the category whose objects are $\mathbb N$ and
  whose morphisms are pairs consisting $(\mathcal F, \pi)$ where $\mathcal F \in \mathfrak F$ and $\pi$ is a 
  permutation of the leaves of $\mathcal F$. The permutations themselves are morphisms in $\mathfrak V$ and the key 
  observation is that for each $\pi\in Mor_{\mathfrak V}(m,m)$ and each $F\in Mor_{\mathfrak F}(m,m)$, there is
  a $\sigma\in  Mor_{\mathfrak V}(n,n)$ with $$\sigma\circ \mathcal F=\mathcal F\circ \pi.$$
  We leave it to the reader to make sense of this and how it yields a well defined category structure on
  $\mathfrak V$ whose group of fractions is $V$. 
  
  For the braided Thompson $BF$ group the situation is very similar, the category $\mathfrak {BF}$ consists of pairs  $(\mathcal F,\alpha)$ where
  $\alpha$ is an $n-string$ braid where $n$ is the number of leaves of $\mathcal F$. See \cite{deh} for the definitions of braided Thompson
  groups.
  
  \section{Review and development of the action of the Thompson groups  on the semicontinuous limit.}
\subsection{How to obtain representations, unitary and otherwise.}\label{construction}

The previous section would be no more than a curiosity  were it not for the
fact that we can now mechanically and uniformly construct actions of all the Thompson groups using
functors.

We will use the language of planar algebras (see the appendix) but we would like to make it clear once again that \emph{all the essential
ideas and many interesting examples are exhibited in the tensor planar algebra so one needs to understand no more
about planar algebras than how diagrams specify ways to contract tensors.}

The common ingredient is a *-planar algebra $\mathcal P=(P_n)$ and an element $R$ of $P_{n+1}$ where we are dealing with
Thompson groups relevant to rationals of the form $\frac{a}{n^k}$ with $a$ and $k$ integers.
The element $R$ must satisfy the ``unitarity'' condition: \\

\hspace{1.7in} \vpic {unitarity} {2in}

Where there are $n$ strings joining the discs containing $R$ and $R^*$.
Such an $R$ will give rise to representations of the various forest categories of section \ref{categories}. We fix $\mathcal P$ and $R$ and treat each case individually as there are some caveats.

\begin{enumerate}[(i)]
\item{Thompson's group $F$.}
Here $R$ is in $P_3$. Let $\mathcal V=(V_n)$ be a representation of the rectangular category $\mathfrak R$ of $\mathcal P$ (In the
case of tensors, the objects are just the tensor powers of a fixed vector space , with morphisms being tensors mapping between the different tensor powers). A morphism in $Mor_{\mathfrak R}(m,n)$ is just a rectangle with $m$  marked points on the bottom and $n$ on the top, filled with elements of $\mathcal P$ in discs connected
by strings among themselves and to the marked points on the boundary.

\begin{definition} Let $\mathcal F$ be a rooted planar forest in $Mor_{\mathfrak F}(m,n)$. Define
 $\Phi_R(\mathcal F)$ to be the element of  $Mor_{\mathfrak R}(m,n)$ obtained by replacing every vertex in $\mathcal F$
 by a disc containing a copy of $R$ as follows:
 \vskip 5pt
 
 \qquad $\mathcal F=$ \vpic {forestnew} {1.5 in} \hspace{0.4in} $\Phi(\mathcal F)=$ \vpic {phif} {1.5in}
 
 \end{definition}
 
 \begin{proposition} The map $\Phi_R$ defines a functor from $\mathfrak F$ to the category $Vect$ of vector spaces
 and linear maps.
 \end{proposition}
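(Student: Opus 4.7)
The plan is to verify the three conditions defining a functor: well-definedness on morphisms, preservation of identities, and preservation of composition. Since the proposition speaks of a functor landing in $Vect$ while the definition manifestly produces a morphism in the rectangular category $\mathfrak R$ of $\mathcal P$, I read the statement as asserting that $\Phi_R$ factors through $\mathfrak R$ and then reaches $Vect$ by post-composing with the chosen representation $\mathcal V$ of $\mathfrak R$ introduced just above.

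First I would check that $\Phi_R(\mathcal F)$ is a well-defined element of $Mor_{\mathfrak R}(m,n)$. A forest $\mathcal F\in Mor_{\mathfrak F}(m,n)$ is only specified up to planar isotopy, so different pictures for the same $\mathcal F$ place the $R$-labelled discs at different positions. However, the rectangular category of $\mathcal P$ is designed precisely so that the value of a labelled planar tangle depends only on its isotopy class; inserting the same element $R\in P_3$ at each trivalent vertex therefore yields a single well-defined element of $Mor_{\mathfrak R}(m,n)$ independent of the representative chosen. Identity preservation is then immediate: the identity on $n$ in $\mathfrak F$ is the forest with $n$ vertical strings and no vertices, so replacing zero vertices by $R$'s gives the rectangle consisting of $n$ parallel strings, which is the identity in $Mor_{\mathfrak R}(n,n)$.

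The main content is compatibility with composition. Given $\mathcal F_1\in Mor_{\mathfrak F}(m,n)$ and $\mathcal F_2\in Mor_{\mathfrak F}(n,p)$, the composite $\mathcal F_2\circ\mathcal F_1$ is obtained by stacking $\mathcal F_2$ on top of $\mathcal F_1$; its vertex set is the disjoint union of the vertex sets of the two factors, and no new vertices are created at the seam. Hence $\Phi_R(\mathcal F_2\circ\mathcal F_1)$ is the rectangle obtained by stacking $\Phi_R(\mathcal F_2)$ on top of $\Phi_R(\mathcal F_1)$ and identifying the $n$ points along the common edge, which is exactly $\Phi_R(\mathcal F_2)\circ\Phi_R(\mathcal F_1)$ by the definition of composition in $\mathfrak R$. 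Post-composing with $\mathcal V$ yields the asserted functor to $Vect$, linearity being inherited from $\mathcal V$.

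There is no real obstacle here; the proposition is essentially a bookkeeping statement recording that replacing each trivalent vertex by a fixed $R\in P_3$ respects stacking. It is worth noting that the unitarity condition on $R$ plays no role in this step and will enter only later, when one wants the connecting maps of the associated direct system (and hence the resulting representation of $G_{\mathfrak F}=F$) to be isometries.
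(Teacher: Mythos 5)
Your proof is correct and follows the same route as the paper, which simply observes that $\Phi_R$ sends the object $n$ to $V_n$ and that the functor property follows from stacking of diagrams in the planar algebra. Your additional remarks on isotopy invariance, identity preservation, and the irrelevance of unitarity at this stage are accurate elaborations of what the paper treats as trivial.
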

 \begin{proof} This is trivial, $\Phi_R$ takes the object $n\in \mathfrak F$ to $V_n$ and the functor property follows from
 stacking of diagrams in planar algebras.
 \end{proof}
 (If one prefers, one could take an appropriate tensor category $\mathfrak C$  with a fixed object $V$, let $V_n=\otimes^n V$ 
 and choose an element $R\in Hom(V,V\otimes V)$. The previous proposition is then just the well-known pictorial
 composition of morphisms in a tensor category.)
 
 We now come to the main object of interest in this paper.
 
 \begin{definition}\label{semicontinuous}
 Let $\mathcal P$, $\mathcal V$ and $R\in P_3$ as above be given and form the functor $\Phi_R$. By section 2 we then have
 a direct system $f\mapsto A(\Phi_R)_f$ of vector spaces on the directed set $\mathfrak T$ of binary planar rooted trees.
 The vector space $$\mathfrak V_R=\underset {\rightarrow}\lim A({\Phi_R})_f$$ will be called the \emph{semicontinous limit}
 vector space for $R$. $\mathfrak V$ contains all of the spaces $V_n$ (of $\mathcal V$) embedded one in the other by
 the maps $\Phi(\mathcal F)$ defined above.
 
 If the planar algebra has positivity, e.g. a subfactor planar algebra, and $R$ satisfies unitarity ,the inclusion maps in the direct limit are isometries so the semicontinuous limit $\mathfrak V$ has a preHilbert space structure . The Hilbert space completion of
 $\mathfrak T_R$ will be denoted $\mathfrak H_R$ and called the \emph{semicontinous limit Hilbert space}. 
 
 \end{definition}

 Note that the pre-Hilbert space structure on $\mathfrak V_R$  is preserved by  the (linear) action of $F$.
 Thus this action extends to a unitary representation $\pi_R$ on $\mathfrak H_R$.
 
 As an exercise, let us calculate a coefficient of $\pi_R$. Suppose $V_n=P_n$  and that
 $dim P_1=1$. Choose  a unit vector $\Omega\in P_0$ (the ``vacuum'').  Let $g\in F$ be given by the pair of trees 
 $(T_1,T_2)$. We want to calculate $\langle \pi_R(g)\Omega, \Omega \rangle$. Let $\iota$ be the tree with one vertex and no edges.
 By definition $\pi_R(g)\Omega$ is $ (T_1,T_2)((\iota, \Omega)).$
 See proposition \ref{action} from which we see that this can also be written $(T_1,\Phi(T_2)(\Omega)).$
 But also $\Omega=(T_1,\Phi(T_1)(\Omega)$. We see that 
$$\langle \pi_R(g)\Omega, \Omega \rangle=\langle \Phi(T_2)(\Omega),\Phi(T_1)(\Omega)\rangle,$$
the inner product being taken in the planar algebra. For instance if \\
$T_1=$ \vpic {tree1} {1in} and $T_2=$ \vpic {tree2} {1in} 
then \\
\vpic {innerproduct} {1in}  = $\displaystyle \langle \pi_R(g)\Omega, \Omega \rangle$  \vpic {iota} {0.0273in} \\
so that if $\delta$ is the loop parameter of the planar algebra then\\
\vskip 5pt
$\displaystyle \langle \pi_R(g)\Omega, \Omega \rangle=\frac{1}{\delta}$ \vpic {innerproduct0} {1in}

There are many interesting choices of $R$. For instance if the planar algebra is the tensor planar algebra 
on a vector space of dimension $3$ with orthogonal basis $\{1,2,3\}$ then we may define $R$ to be the
$3$-tensor\\
$R_{i,j,k} = \begin{cases} 0 &\mbox{if } i=j \mbox{ or } j=k \mbox{ or }  i=k \\ 
1 &\mbox{otherwise } .
\end{cases}
$
 Then $\displaystyle \langle \pi_R(g)\Omega, \Omega \rangle$ is equal to the number of ways of 3-colouring the 
 edges of the three valent graph underlying the diagram for this inner product in such a way that the 3 colours at
 any vertex are distinct. The positivity of these coefficients for all $g\in F$ is known to be equivalent to the 4-colour theorem!
 We are grateful to Roland Bacher for pointing this out-see \cite{rt}.
 
 Or, if the planar algebra is the version of the tensor planar algebra in which the $n$ indices sit in the regions and\\
 
\qquad \vpic {rijk} {1in} $= \begin{cases} 0 &\mbox{if } i=j \mbox{ or } j=k \mbox{ or }  i=k \\ 
1 &\mbox{otherwise } .
\end{cases} $\\
then $n\displaystyle \langle \pi_R(g)\Omega, \Omega \rangle$ is the number of ways of $n$-colouring the map defined
by the diagram for this inner product.  If $n=3$ the map can  be coloured in 6 ways or not at all so we find
that the set of all $g$ for which $\displaystyle \langle \pi_R(g)\Omega, \Omega \rangle=2$ is a subgroup of $F$.
Yunxiang Ren has shown that it is isomorphic to the Thompson group $F_4$.

\item{Thompson's group T.} Here things work almost exactly as they do for $F$. One takes the same kind of $R$ as before and
 an affine representation of the planar algebra. Replacing the vertices of  morphisms in $\mathfrak{AF}$ by discs containing
 $R$ gives a functor from $\mathfrak{AF}$ to Vect, taking $n$ to the $n$ vector space of the affine representation. We thus
 get a representation of $\tilde T$. If the representation is irreducible and the rotation acts as a scalar then one obtains a projective
 representation of $T$. The unitary affine representations of the Temperley-Lieb algebra are well understood for all values of
 the loop parameter - see \cite{gl},\cite{jo3} and \cite{JR}. The same examples of $R$ as for $F$ yield similar interpretations of coefficients.
 
 \begin{definition}\label{annularsemicontinuous} The semicontinuous limit vector space and Hilbert space $\mathfrak V_R$ 
 and $\mathfrak H_R$ are defined in exactly
 the same way in this annular context they were in \ref{semicontinuous} for rectangular representations of the planar algebra.
 \end{definition}
 
 Note that $\tilde T$ acts unitarily on $\mathfrak H_R$ if the planar algebra has positivity and $R$ satisfies unitarity.
  
  It is clear that the projective representation of $T$ will be an ordinary representation if the affine representation is
  in fact annular (see \ref{annular}).
  
 \item{Thompson's group V.} The representations are easiest to describe if we use the tensor planar algebra based on
 an underlying vector space $V$.  We can choose any 
 tensor $R$ with three indices satisfying the unitarity condition. A permutation $\pi$  in $S_n$ defines a linear map
 $\otimes^n \pi$ on $\otimes^n V$ by permuting 
 coordinates so for $(\mathcal F,\pi) \in Mor_{\mathfrak V}(m,n)$ we may define $\Phi((\mathcal F, \pi))$   by filling in $\mathcal F$'s vertices with discs
 containing $R$ as before then composing the corresponding linear map from $\otimes^m V$ to $\otimes^n V$ with $\otimes^n  \pi$. 
 It is easy to check that this $\Phi$ is a functor and hence defines a unitary representation of $V$.
 
 Note that there is a purely diagrammatic way to represent the category $\mathfrak V$ by drawing permutations as strings connecting
 permuted points. So if one could find a planar algebra $(P_n)$ with an element of $P_4$ satisfying the obvious relations of a transposition:\\
 
  \vpic {crossing0} {0.5in} such that  \vpic {crossing1} {1in} and \vpic {crossing2} {2in}
 
\noindent then one will get a representation of $V$ provide the $R\in P_3$ and the crossing in $P_4$ satisfy:\\
 
\hspace{1.5in} \vpic {compatibility1} {1.8in}
 
 This condition is automatic if the crossing is the transposition acting on $V\otimes V$.

\item{Braided Thompson group.} This works just like for $V$.  There is a purely diagrammatic representation of 
morphisms in $\mathfrak {BF}$ which is just like the the one for $\mathfrak V$ except that the transposition is allowed to be 
a crossing:\\
 
 \hspace {2in} \vpic {crossingfortrivalent} {0.7in}
 
 satisfying the braid group relations and the following two relations with the vertices of the trees:\\
 
 \vpic {flat1} {0.7in}  $=$ \vpic {flat2} {0.7in}  and \vpic{flat3} {0.7 in}  $=$ \vpic {flat4} {0.7 in}  .
 
 Representations of $\mathfrak{BF}$ are easy to come by in planar algebras/tensor categories. Coeffiecients
 of the form $\langle g\Omega,\Omega\rangle$ are just the partition functions in the corresponding planar algebra.
 
 Let us make one curious remark. We saw that the braid group can be obtained as the group of fractions of the
 semigroup of positive braids. So it is with $\mathfrak {BF}$ which we can make smaller by requiring that all 
 the crossings be positive. Then to obtain a representation of $BF$ we only need the first of the two relations
 above between the crossing and the trivalent vertex. We have not investigated this.
 
\end{enumerate}

\subsection{The relation between these constructions and those of \cite{jo4}.}
The representations of $F$ obtained in \cite{jo4} may be obtained by the construction of this paper by first 
embedding $F=F_2$ in $F_3$ by taking a pair of trees $(T_1,T_2) \in F_2$ and adding strings to turn all the trivalent
vertices into quadrivalent ones, obtaining the pair $(\hat T_1,\hat T_2)$  as illustrated below:\\

$(T_1,T_2)= \big {(}$ \vpic {t1t2} {1.3in} $\big )\qquad \rightarrow (\hat T_1, \hat T_2)= \big ($ \vpic {t1hatt2hat} {1.3in} $\big )$

Thus it was possible to use elements $R\in P_4$ rather than $R\in P_3$ to obtain representations of $F_2$. Otherwise the 
construction of representations of $F$ \cite{jo4} was just a more clumsy version of what we have done in this paper  in much greater
generality.

\section{The NoGo theorem.}

The Thompson group $T$ contains the subgroup $Rot$ of rotations of the circle by dyadic rationals.
We will prove that the representation of $Rot$ on the semicontinuous limit Hilbert space (from an affine
representation of a positive definite planar algebra) is highly dicontinuous if we topologise $Rot$ as 
a subgroup of $\mathbb R /\mathbb Z$. 
This is not at all surprising. The geometric structure underlying the semicontinuous limit is the full binary tree whose
branches are dangling and do not feel the topology of the circle. The discontinuity result is true in
great generality but we will only prove it for a single family of planar algebras (with positive definite inner product). We have chosen
this family because there is, up to an irrelevant scalar, only one choice of $R$.  

To be precise, let $\mathcal Q=(Q_n)$ be the planar algebra obtained from the TL planar algebra with loop parameter 
$\delta = 2\cos \pi/n$ for $n=6,7,8,9,\cdots$ by cabling
2 strings and cutting down by the JW idempotent (this is also quantum $SO(3)$ at a root of unity). See \cite{MPS}. We can represent the JW idempotent in $TL_4$
with 4 boundary points as a box \vpic {jw} {0.3in} , entirely defined by the relations \\ \vpic {jwsquared} {0.8in} , \vpic {jwstar} {1in} and 
\vpic {jwcap} {0.6in} . 
One checks that \vpic {jwtrace} {0.6in} $= \delta^2-1$.
In $\mathcal Q$ it is well known that $Q_3$ is spanned by the single element $$R=\sqrt{\frac{\delta}{\delta^2-2}} \vpic {fork} {0.5in}  ,$$
the normalisation guaranteeing unitarity. 
$\mathcal Q$ is obtained by combining the cabled strings to a single string. Thus $R$ is an element of $Q_3$ and in
$\mathcal Q$ the loop parameter is $d=\delta^2-1$. 
  
 \emph{ Note that $R$ is rotationally invariant so we will  suppress it in all pictures, i.e. from now on} \vpic {trivalentvertex} {0.3in}   
 \emph{ will mean } \vpic {trivalentvertexR} {0.3in}  .

  Now let $\mathcal H$ be the semicontinuous limit Hilbert space constructed in \ref{semicontinuous} and \ref{annularsemicontinuous}
from the planar algebra $\mathcal Q$ and an annular representation $\mathcal V=(V_n)$ of it, using the element $R$ defined above to construct the functor $\Phi$.
By section \ref{construction} we know that Thompson's group $T$ acts unitarily on $\mathcal H$. In particular for every dyadic rational 
$r=\frac{a}{2^n}\in [0,1)$ we have a unitary $\rho_r$ on $\mathcal H$ representing the rotation by $a$ in $T$.

We have proved the following for $d=4\cos^2\pi/n +1$ for $n\in \mathbb N, 7\leq n\leq 20$ and $d=3$. It is 
false for $n=5$ and $n=6$ and surely true for all $n\geq 7$.

\begin{theorem} For any vectors $\xi,\eta\in \mathcal H$,
\quad $\displaystyle \lim_{n\rightarrow \infty}\langle \rho_{\frac{1}{2^n}}\xi,\eta\rangle=0.$
\end{theorem}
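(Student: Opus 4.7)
The plan is to reduce the claim to an explicit spectral estimate on a transfer matrix built from $R$. Since each $\rho_{1/2^n}$ is unitary and $\mathcal H$ is the closure of $\bigcup_k V_{2^k}$ under the isometric tree embeddings $\Phi(B_k)$, a density argument reduces the statement to the case $\xi=(B_k,v)$, $\eta=(B_k,w)$ with $v,w\in V_{2^k}$ for some fixed $k$. For $n\geq k$ push both vectors up to level $2^n$ using the forest $F_{n,k}$ consisting of $2^k$ parallel copies of $B_{n-k}$; since $\Phi(F_{n,k})$ is an isometry and $\rho_{1/2^n}$ is represented at that level by $\Phi(\rho_{2^n})$, one has
\[
\langle \rho_{1/2^n}\xi,\eta\rangle \;=\; \langle \Phi(\rho_{2^n})\Phi(F_{n,k})v,\,\Phi(F_{n,k})w\rangle_{V_{2^n}} \;=\; \langle A_{n-k}v,w\rangle_{V_{2^k}},
\]
with $A_m := \Phi(F_{m+k,k})^*\,\Phi(\rho_{2^{m+k}})\,\Phi(F_{m+k,k})$ a self-adjoint operator on the \emph{fixed} space $V_{2^k}$. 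It therefore suffices to prove that $A_m\to 0$ strongly as $m\to\infty$.

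Each $A_m$ is a concrete closed planar tangle on an annulus: two mirror-image balanced binary forests of depth $m$, each going from $2^k$ roots to $2^{m+k}$ leaves, joined along their leaves with the identification twisted by a one-position shift. Choose the meridian along which the shift wraps around; then off the meridian the two forests match up and collapse via the unitarity relation $\Phi(B_1)^*\Phi(B_1)=\mathrm{id}$, while along the meridian a single \emph{zipper} of mismatched $R$- and $R^*$-vertices propagates from the outer boundary down through all $m$ levels. Passing from $A_{m-1}$ to $A_m$ adds exactly one further link to this zipper, which is the transfer-matrix structure anticipated in the introduction.

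Encode the zipper by an operator $\mathcal T$ acting on a finite-dimensional auxiliary space $\mathcal K$ spanned by the local through-strand configurations at the zipper interface, in such a way that $A_m = B^*\mathcal T^m B$ for bounded maps $B:V_{2^k}\to\mathcal K$ depending only on $k$ and on the boundary structure near the root. Strong convergence $A_m\to 0$ then follows if $\|\mathcal T\|<1$, or even if $\mathcal T^m\to 0$ strongly. In $\mathcal Q$ the operator $\mathcal T$ is a small, explicit matrix polynomial in $R$ and the Jones-Wenzl projector with entries rational in the loop parameter $d=\delta^2-1$, so its spectrum is a finite computation.

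The main obstacle is exactly this spectral inequality. The bound $\|\mathcal T\|<1$ fails at the parameters coming from $n=5,6$, where $d$ is too small for the Temperley-Lieb weights to dampen the zipper, which is consistent with the fact that the theorem is genuinely false there. For each $d$ in the theorem's hypothesis one verifies $\|\mathcal T\|<1$ by direct computation, which is why the result is formulated case-by-case rather than uniformly. A case-free proof would presumably proceed by diagonalizing $\mathcal T$ along a basis of through-strand configurations and controlling the resulting eigenvalues as functions of $d$, but that analysis is not attempted here.
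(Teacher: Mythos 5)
Your opening reduction is sound and matches the paper: restrict to $\xi,\eta$ in a fixed $V_{2^k}$, push both up the balanced binary tree to level $2^{k+n+1}$ where $\rho_{1/2^{k+n+1}}$ acts by a one-position shift, and study the resulting operator on $V_{2^k}$. But the structural claim that drives the rest of your argument is false. A shift by one position at the \emph{finest} level is not localized near a meridian: it glues leaf $i$ of the up-tree to leaf $i+1$ of the down-tree for \emph{every} $i$, so no up-vertex ever meets its mirror down-vertex and the unitarity relation $R^*R=\mathrm{id}$ cannot be applied anywhere on the top layer. What you actually get is a full ring of $2^{k+n}$ mismatched $R$--$R^*$ pairs, i.e.\ the transfer matrix $T_{2^{k+n}}(x)$ for an explicit $x\in Q_4$. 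Consequently there is no ``zipper'' growing by one link per level and no identity $A_m=B^*\mathcal T^m B$ for a fixed linear $\mathcal T$ on a fixed auxiliary space. When you descend one level, adjacent \emph{pairs} of boxes fuse with one vertex from each tree, so the local element evolves by the \emph{quadratic} map $\mathscr R(x)=\mathscr B(x,x)$ on the three-dimensional space $Q_4$ while the number of boxes halves, giving $\langle\rho_{1/2^{k+n+1}}\xi,\eta\rangle=\langle T_{2^k}(\mathscr R^n(x))\xi,\eta\rangle$. (Also, $A_m$ is not self-adjoint: it conjugates a rotation, not a self-adjoint element, by an isometry.)

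This changes the nature of the hard step. For a nonlinear squaring map the relevant estimate is not a spectral radius bound but an inequality of the form $\|\mathscr R(a)\|_1\leq M\|a\|_1^2$, which forces $\mathscr R^n(x)\to 0$ only once some iterate lands in the ball of radius $K$ with $MK<1$; verifying that the orbit of the specific initial point $x$ ever enters this basin is exactly the finite computation that makes the theorem case-by-case in $d$ (and is what fails for $n=5,6$ --- not a spectral inequality on a linear $\mathcal T$). Finally, you would still need the step you elide at the end: smallness of $\mathscr R^n(x)$ in \emph{some} norm on $Q_4$ must be converted into smallness of the operator norm of $T_{2^k}(\mathscr R^n(x))$ on the representation space $V_{2^k}$; the paper does this by factoring the transfer matrix into $2^k$ local insertions, each of operator norm equal to the $C^*$-norm of $\mathscr R^n(x)$ in $Q_4$, and using equivalence of norms in finite dimensions. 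As written, your proposal identifies the wrong dynamical system and therefore the wrong obstacle, so the gap is essential rather than cosmetic.
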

\begin{proof} Note that since the representation is unitary we may suppose that $\xi$ and $\eta$ are actually in 
some space $(T,V_{2^k})$ where $T$ is the annular tree \\ 
\mbox{                               } \vpic {fulltree} {1in} (shown for $n=4$).
The following diagram is  $\langle \rho_{\frac{1}{2^{k+n+1}}}\xi,\eta\rangle$ which we illustrate
here for $k=1$ and $n=3$. Note that we are applying periodic boundary conditions.  

\vskip 5pt

\vpic {gxieta} {3.5in}

Now all the regions in the blue dotted circles can be isotoped to look like 

\vpic {fourboxrotangle} {1in} so if we call  $x$ this element of $Q_4$ the picture becomes:

\vpic {transferforrot} {3.5in}

(The positions of the \$ signs in the picture are obvious, so suppressed.)

We recognise  the {\em transfer matrix} (see appendix \ref{affinecat}) $T_{2^{n+k}}(x)$ ! 

\begin{definition}\label{renormalgebra}  We define the bilinear map
$\mathscr{B}:Q_4\times Q_4 \rightarrow Q_4$ by\\

\hspace {2in} $\displaystyle \mathscr{B}(x,y)=$ \vpic{renormed} {1in}
and the renormalisation map $\mathscr{R}(x)=\mathscr{B}(x,x)$
\end{definition}

Observe that $\mathscr{B}$ makes $Q_4$ into a commutative non-associative algebra for which
$\mathscr{R}$ is the squaring operation.

We see the inner product formula becomes (if $y=\mathscr{R}(x)$):
\vskip 10pt
\hspace{1in} \vpic {nextround} {4in} 

Continuing in this way we see that 
$$\langle \rho_{\frac{1}{2^{k+n+1}}}\xi,\eta\rangle= \langle T_{2^k}(\mathscr{R}^{n}(x))\xi,\eta\rangle $$

We thus have to understand the iterates of the renormalisation transformation 
$\mathscr R:Q_4\rightarrow Q_4$. We begin by calculating $\mathscr R$ explicitly. For this we use the
basis  $\{\BA,\BB,\BC\}$ of $Q_4$ and write an arbitrary element of $Q_4$
as $$a=p\BA+q\BB+r\BC.$$
Since $\mathscr B$ is bilinear it is easy to expand and compute $\mathscr{R}(a)$ using the skein relations
in $\mathcal Q$. A sufficient set of relations is the following (see \cite{MPS}):

\vpic{skein1} {0.1in} $=0$, \vpic{skein2} {0.5in} $=\frac{d-2}{d-1}( \vpic{skein3} {0.5in} ) $ , and of course unitarity, \vpic{skein4} {0.3in} .

(A quick way to deduce the second picture-both sides are eigenvalues for the rotation of $\pi/2$ with eigenvalue $-1$.  But, 
modulo the TL subspace, the rotation has eigenvalue $+1$ by looking at a spanning set of TL diagrams reduced by the JW.
Thus the two sides of the equation are proportional and the constant can be obtained by capping and using unitarity.)

With these relations it is not hard to show that:\\
$\displaystyle \mathscr R(a)=\{\frac{d^2 - 5d + 7}{(d - 1)^2}p^2 + 2pq + 2\frac{d-2}{d-1}pr + q^2+r^2\}\BA\\ 
\mbox{                 }\qquad\qquad -\{\frac{1}{(d - 1)^3}p^2 + \frac{1}{d-1}(2pq + q^2)\}\qquad\BB$\\
\mbox{                 }\qquad\qquad  $\displaystyle +\{\frac{d^2 - 3d + 3}{(d - 1)^3}p^2 + \frac{1}{d-1}(2pq + q^2)\}\qquad\BC .$

 Completing some squares we get \\
 $\displaystyle \mathscr R(a)=\{(p+q)^2+(r+\frac{d-2}{d-1}p)^2-\frac{(d+1)(d-2)}{(d-1)^2}p^2\}\BA\\ 
\mbox{                 }\qquad\qquad -\{\frac{1}{(d - 1)}(p+q)^2- \frac{d(d-2)}{(d-1)^2}p^2\}\qquad\BB$\\
\mbox{                 }\qquad\qquad  $\displaystyle +\{(p+q)^2+(r+\frac{d-2}{d-1}p)^2- \frac{(d+1)(d-2)}{(d-1)^2}p^2\}\qquad\BC .$
 
Now define the norm $||-||_1$ 
 on $Q_4$ by $||p\BA+q\BB+r\BC||_1=|p|+|q|+|r|$ . Then the above shows that
$$||\mathscr R(a)||_1\leq \frac{d+1}{d-1}(p+q)^2+(r+\frac{d-2}{d-1}p)^2 +\frac{d(d+1)(d-2)}{(d-1)^3}p^2$$

 By convexity the maximum of the right hand side on the $||-||_1$ unit ball is 
 $$M=\frac{d+1}{d-1}+(\frac{d-2}{d-1})^2+\frac{d(d+1)(d-2)}{(d-1)^3}.$$
 Hence $$||\mathscr R(a)||_1\leq M||a||_1^2$$ and if there is an $n$ for which
 $\displaystyle ||\mathscr R^n(\BA)||_1<K $ for some $K>0$ with $MK<1$ then 
 $\displaystyle ||\mathscr R^{n+1}(\BA)||_1<KM||\mathscr R^n(\BA)||_1$ and 
  $\lim_{n\rightarrow \infty} {\mathscr R}^n(\BA)=0$. 
  
  Computer calculations show that such an $n$ exists (indeed is rather small) for all the values of $d$
 mentioned before the statement of the theorem.
 
 Now consider the following element  $Y\in Hom(V_{2^k+2},V_{2^k+2})$, with $y={\mathscr R}^n(\BA)$
(illustrated for $n=3$):\\
\mbox{  }\hspace{1.5in} $Y=$ \vpic {transfernorm1} {1.7in} ,\\
and the following elements $\tilde \xi$ and $\tilde \eta$ of $V_{2^k+2}$:\\

$\tilde \xi =$ \vpic{transfernorm2} {1.5in}    \qquad $\tilde \eta =$ \vpic{transfernorm3} {1.5in}

(Remember that we have imposed periodic horizontal boundary conditions.)

Then a picture shows that $\langle Y\tilde \xi, \tilde \eta\rangle =\langle T_{2^k}(\mathscr{R}^{n}(x))\xi,\eta\rangle$

But we can now easily estimate $||Y||$ for it is a composition $y_1y_2\cdots y_n$ where $y_i$ is the element of 
$Hom(V_{2^k+2},V_{2^k+2})$ with a copy of $y$ between the $(i+1)th.$ and $(i+2)th.$ boundary points as illustrated
below:\\

\mbox{   } \qquad $y_i=$ \vpic{transfernorm4} {2.3in}

But the norms of the $y_i$ are all equal to the norm of $y$ as an element of the C$^*$-algebra $Q_4$. 
And, all norms being equivalent, we have shown that $||Y||\rightarrow 0$ as $n\rightarrow \infty$.
This proves the theorem.

\end{proof}
  
\begin{appendix}
\section{Some notions of planar algebra.} \label{planar}

In this paper a planar algebra $\mathcal P$ will be a graded vector space $P_n$, graded  by $\mathbb N\cup \{0\}$ and admitting multilinear operations indexed
by \emph{planar tangles} $T$ which are subsets of the plane consisting of a large (output) circle containing smaller (input circles). There are also 
non-intersecting smooth curves called strings whose end points, if they have any, lie on the  circles where they are called marked points. Elements of $\mathcal P$ are ``inserted'' into 
the input circles with an element of $P_n$ going into a disc with $n$ marked points, and the result of the operation specified 
by the tangle is in $P_k$ where
there are $k$ marked points on the output circle. In order to resolve cyclic ambiguities, each of the circles of $T$ comes with
a privileged interval between marked points which we will denote in pictures by putting a $\$ $ sign near that interval. The \$ signs
are used to define an obvious notion of gluing of one tangle inside an internal disc of another.

Here is an example of a planar tangle: \\

\hspace{1in} \hpic{tanglesample} {2in}

The result of the operation indexed by $T$ on elements $v_1,v_2,\cdots, v_n$ of $\mathcal P$ is denoted $Z_T(v_1,v_2,\cdots, v_n)$ where there are $n$ input discs. See \cite{jo2} for details.
The operation $Z_T$ depends only on $T$ up to smooth planar isotopy so one has a lot of freedom drawing the tangles, in particular the circles may be replaced
by rectangles when it is convenient.  The operations $Z_T$ are compatible with the gluing of tangles.
Tangles may also be ``labelled'' by actually writing appropriately graded elements of $\mathcal P$ inside some of the internal circles.

\emph{It is a very useful convention to shrink the input discs in a planar tangle to points so that the boundary intervals of the circle become
the regions adjacent to the points. And for labelled tangles one places the label in the region corresponding to the $\$$ sign. }Thus\\

\vpic{convention1} {1in} is represented by the picture \vpic{convention2} {1in}

We will also often omit the output disc and/or dollar signs provided they are obvious in context.

\begin{definition} \label{partition function}Given a  planar tangle $T$ all of whose internal circles are labelled by  $v_1,v_2,\cdots, v_n$
 we call  $Z_T(v_1,v_2,\cdots, v_n)$
 the element of $P_k$ which it defines. If $k=0$ and the dimension of $P_{0}$ is one, this may be identified
 with a scalar using the rule that $Z(\rm{empty tangle})=1$.
 \end{definition}

Planar tangles can be glued in an obvious way along input circles and  the operations $Z_T$ are by definition compatible with
the gluing.

 For connections with physics and von Neumann algebras, planar algebras will have more structure, namely an antilinear involution $*$ 
 on each $P_n$ compatible with orientation reversing diffeomorphisms acting on tangles.  If, moreover, $dim P_0=0$ we get a sesquilinear
 inner product $\langle S, R\rangle$ on each $P_n$ given by \vpic {ip} {1in}. 
 
 A planar algebra will be called \emph{positive definite} if this inner product is. 
 
  Our planar algebras will all have a parameter $\delta$ which is the 
value of a closed string which may be removed from any tangle with multiplication by
the scalar $\delta$.

Two examples of planar algebras should be mentioned. The first is the Temperley-Lieb algebra  $TL$ (which has its origins
in \cite{TL} though its appearance here should properly be attributed to \cite{Kff2}, via \cite{Ba2}-see also \cite{J9}).  A basis of $TL_n$ consists of all isotopy classes of systems of non-crossing strings joining
$2n$ points inside the disc. In particular $TL_n$ is zero if $n$ is odd. The planar algebra operations are the obvious gluing ones with the rule that any
closed strings that may be formed in the gluing process are discarded but each one counts for a multiplicative
factor of $\delta$, called the ``loop parameter".  The * structure is given by complex conjugation on 
basis diagrams, extended by conjugate linearity. This planar algebra is positive definite iff $\delta \geq 2$. If $\delta=2cos\pi/n$ for
$n=3,4,5,\cdots$ $TL$ admits a quotient planar algebra which is positive definite. 


The second examples of  planar algebras which we will use are the tensor planar algebras. For fixed integer $k\geq 2$ one considers
A Hilbert space $V$ of dimension $k$ with a basis  so that elements of the tensor power $\otimes^n V$ may be represented
as tensors with $n$ indices, each index running from $1$ to $n$.
 The  planar algebra $\mathcal P^{\otimes}$ is then defined by 
$P^{\otimes}_0=\mathbb C$, and for $n\geq 1$, $P^{\otimes}_n=\otimes^n V$ .
The action of planar tangles on tensors is nothing but contraction of tensors along the indices connected by the the strings
of the tangle, together with the rule that indices have to be constant along the strings.
The tensor planar algebras $\mathcal P^{\otimes}$ are positive definite when given the *-structure
 $$R^*_{i_1,i_2,\cdots,i_n}=\bar R_{i_n,i_{n-1},\cdots,i_1}.$$


\section{The affine category of a planar algebra.}\label{affinecat}
\begin{definition} The \emph{affine} category $Aff(\mathcal P) $ is the (linear) category whose objects are sets $\bar m$ of $m$ points
on the unit circle in $\mathbb C$ ,and whose vector space of  morphisms from $\bar m$ to $\bar n$ is the set of linear combinations of labelled tangles 
(with marked boundary points $\bar m\cup \bar n$) between the unit circle and a circle of
larger radius  modulo any relations in $\mathcal P$ 
which occur within contractible discs between the unit circle and the larger circle. Composition of morphisms comes from rescaling and gluing 
the larger circle of the first morphism to the smaller circle of the second.

\end{definition}

If $P$ is positive definite the morphism spaces of  $Aff(\mathcal P) $ admit an adjoint $x\mapsto x^*$ obtained by reflecting a labelled annular tangle about a circle
between the inner and outer circles of the tangle and taking the $*$'s of the labels.
 
Use of $\bar m$ adds to clutter so we will abuse notation by using just $m$ for an object of $Aff(\mathcal P) $ with $m$ points. We could also just suppose that 
the boundary points are always just the roots of unity.

One needs to be careful with this definition (see \cite{jo3},\cite{gl}). In a representation of $Aff(\mathcal P) $, morphisms may be changed by planar  isotopies 
without affecting the action, but the
isotopies are required to be the identity on the inner and outer circles.   Thus the tangle of rotation by 360 degrees does not necessarily act by the
identity in a representation of $Aff(\mathcal P)$.

The representations we will consider of $Aff(\mathcal P)$ are called lowest weight modules and may be defined as in \cite{jo3} by taking a representation $W$
of the algebra $Mor(n,n)$ for some $n$ (the ``lowest weight'') and inducing it in the obvious way. This may cause problems with positive definiteness but it is known that
subfactor planar algebras possess a host of such representations. The vector spaces $V_k$ of such a lowest weight representation  are zero 
if $k\leq n$ and spanned by  diagrams consisting of a vector $w\in W$ inside a disc with $n$ marked points, surrounded by a labelled planar tangle
of $\mathcal P$ with $k$ marked points on the output circle. 
 
 Here is a vector $w$ in  a $V_6$ created by the action of an affine morphism on $v$ in the lowest weight space $V_2$: 
  and the action of a morphism in $Aff(\mathcal P)$ on it:\\
 
 \hpic{creation} {1in}

 and  here is a diagram illustrating the result of acting on the above vector $w$ with   a morphism in $Mor(6,4)$:\\
 
 \hpic{actonw} {1.5in}

 The planar algebra itself defines an affine representation simply by applying annular labelled
tangles to elements of $\mathcal P$. This representation  is irreducible  and plays the role of the trivial representation.

In the TL case which is what we will mostly consider, irreducible lowest weight representations are parametrized by their lowest weight (the smallest $n$ 
for which $V_n$ is non-zero), and a complex number of absolute value one which is the eigenvalue for the rotation tangle.
The case $n=0$ is exceptional and the rotation is replaced by the tangle which surrounds an element $v$ of $V_0$ by a circular string. If $v$ is an eigenvector for this tangle and there are some restrictions on the eigenvalue $\mu$-see \cite{jo3},\cite{JR}.
The case where $\mu=\delta$ is precisely the trivial representation. In this case the vector $v\in V_0$ is the empty diagram so it never features
in pictures. 

\begin{definition} An affine representation $V=V_{k}$ of a positive definite planar algebra will be called a \emph{Hilbert} representation
if each $V_k$ is equipped with a Hilbert space inner product which is invariant in the sense that $\langle a\xi,\eta\rangle = \langle \xi, a^*\eta\rangle$ where $a\in Mor(m,n), \xi\in V_n$, $\eta\in V_m$ and $*$ is the structure we defined earlier on the affine category.
 
\end{definition}

There are two particularly important affine tangles which  play a big role in our main theorem.
\begin{definition}
If $S\in P_4$ we define the ``transfer matrix'' $T_n(S)$ to be the element of  $Mor(n,n)$ defined by the following annular tangle:

\hspace {1in} \vpic{transfermatrix} {1.5in} 

which it will be more convenient to draw horizontally with implcit periodic boundary conditions thus:\\

\hspace {1in} \vpic{transperiodic} {2.3in}
 \end{definition}
(Here we have illustrated with $n=6$.) 
The second important tangle is the rotation
\begin{definition} The rotation $\rho_n \in Mor(n,n)$ is given by:\\
\mbox{   } \hspace {1in} \vpic {rotation} {1.5in}

(illustrated for $n=8$).
\end{definition}

\begin{definition}\label{annular} An affine representation $V=V_{k}$ of a  planar algebra will be called an \emph{annular} representation
if the rotation by $2\pi$ ($=\rho_n^n$) acts by the identity. \end{definition}

$\rho_n$  generates a copy of $\mathbb Z$ inside $Mor(n,n)$.

\end{appendix}


\begin{thebibliography}{99}



\bibitem{Ba2}
Baxter, R. J. (1982).
{\em Exactly solved models in statistical mechanics}.
Academic Press, New York.


\bibitem{CFP}
Cannon, J.W., Floyd,W.J. and Parry, W.R.(1996)
Introductory notes on Richard Thompson's groups.
{\em L'Enseignement  Math\'ematique} 
{\bf 42} 215--256

\bibitem{CV}
Cirac, J. I. and Verstraete, F. (2009)
Renormalization and tensor product states in spin chains and lattices.
 {\em JOURNAL OF PHYSICS A-MATHEMATICAL AND THEORETICAL}
 {\bf 42 (50)}



\bibitem{ddgkm}	P. Dehornoy, F. Digne, E. Godelle, D. Krammer, J. Michel,
 Foundations of Garside Theory. 
arXiv:1309.0796

\bibitem{deh}  P.Dehornoy (2006),
The group of parenthesized braids,
Advances in Mathematics 205 (2006) 354 Ð 409







\bibitem{ev}G. Evenbly, G. Vidal,
Tensor Network Renormalization,
arXiv:1412.0732






\bibitem{gl}J.~J.~Graham and G.I.~Lehrer, \emph{The representation theory of affine Temperley 
Lieb algebras}, L'Enseignement Math\'ematique \textbf{44} (1998), 1--44. 






\bibitem{jo2}V.F.R. Jones, Planar Algebras I, preprint.
	math/9909027

\bibitem{J22}
Jones, V. F. R.
In and around the origin of quantum groups.
{\em Prospects in mathematical physics.} 
 {\em Contemp. Math., 437    Amer. Math. Soc.}  (2007) 101--126.
math.OA/0309199.

\bibitem{J9}
Jones, V. F. R. (1989).
On knot invariants related to some statistical mechanical models. 
{\em Pacific Journal of Mathematics}, {\bf 137}, 311--334.

\bibitem{jo3}V.F.R. Jones, The annular structure of subfactors, 
in ``Essays on geometry and related topics'', {\em Monogr. Enseign. Math.} {\bf 38} (2001), 401--463.

\bibitem{jo4}V.F.R. Jones (2014)
Some unitary representations of Thompson's groups F and T,
arXiv:1412.7740 




\bibitem{JR} Jones, V. and Reznikoff, S. (2006)
Hilbert Space representations of the annular 
          Temperley-Lieb algebra.
{\em Pacific Math Journal}, {\bf 228}, 219--250

\bibitem{Kff2}
Kauffman, L. (1987).
State models and the Jones polynomial.
{\em Topology}, {\bf 26}, 395--407.



\bibitem{MPS} S. Morrison, E.Peters, N. Snyder, (2015)
 Categories generated by a trivalent vertex.
arXiv:1501.06869

\bibitem{ps}V Pasquier, H Saleur,
Common structures between finite systems and conformal field theories through quantum groups
Nuclear Physics B 330 (2), 523-556

\bibitem{Pen}
Penrose, R. (1971).
Applications of negative dimensional tensors.
{\em Applications of Combinatorial Mathematics}, Academic Press, 221--244




\bibitem{rt}R.Thomas , (1998)
An Update on the Four-Color Theorem,
Notices of the AMS (45) 848-859



\bibitem{TL}
Temperley, H. N. V. and Lieb. E. H. (1971).
Relations between the ``percolation'' and 
``colouring'' problem and other graph-theoretical
problems associated with regular planar lattices:
some exact results for the ``percolation'' problem.
{\em Proceedings of the Royal Society A},  {\bf 322}, 251--280.




\end{thebibliography}
\end{document}